\newtheorem{theorem}{Theorem}[section]
\newtheorem{lemma}[theorem]{Lemma}
\newtheorem{corollary}[theorem]{Corollary}
\theoremstyle{definition}
\newtheorem{example}[theorem]{Example}
\theoremstyle{remark}
\newtheorem{remark}[theorem]{Remark}
\numberwithin{equation}{section}
\DeclareMathOperator{\nbe}{n_b}
\DeclareMathOperator{\RealPart}{Re}
\begin{document}

\title{Wallis-Ramanujan-Schur-Feynman}
\author{T.~Amdeberhan, O.~R.~Espinosa, V.~H.~Moll and A.~Straub}
\date{}
\maketitle

\begin{abstract}
One of the earliest examples of analytic representations for $\pi$ 
is given by an infinite product
provided by Wallis in $1655$. The modern literature often presents
this evaluation based on the integral formula
$$ \frac{2}{\pi} \int_0^\infty \frac{dx}{(x^2+1)^{n+1}} = \frac{1}{2^{2n}} \binom{2n}{n}. $$
In trying to understand the behavior of this integral when the integrand
is replaced by the inverse of a 
product of distinct quadratic factors, the authors encounter relations
to some formulas of Ramanujan, expressions involving Schur functions,
and Matsubara sums that have appeared in the context of Feynman 
diagrams. 
\end{abstract}

\section{Wallis's infinite product for  $\pi$}\label{sec-intro}

Among the earliest analytic expressions for $\pi$ one finds two infinite
products: the first one given by Vieta \cite{vieta1} in $1593$,
\[
  \frac{2}{\pi} = \sqrt{\frac{1}{2}}
  \sqrt{\frac{1}{2} + \frac{1}{2} \sqrt{\frac{1}{2}}}
  \sqrt{\frac{1}{2} + \frac{1}{2} \sqrt{\frac{1}{2} + \frac{1}{2} \sqrt{\frac{1}{2}}} } \cdots,
\]
and the second by Wallis \cite{wallis1} in $1655$,
\begin{equation}\label{wallis-0}
\frac{2}{\pi} = \frac{1 \cdot 3}{2 \cdot 2} \cdot
\frac{3 \cdot 5}{4 \cdot 4} \cdot
\frac{5 \cdot 7}{6 \cdot 6}  \cdot
\frac{7 \cdot 9}{8 \cdot 8} \cdots.
\end{equation}
In this journal, T. Osler \cite{osler1}
has presented the remarkable formula
\[
  \frac{2}{\pi} =\prod_{n=1}^{p}
  \sqrt{\frac{1}{2} + \frac{1}{2} \sqrt{\frac{1}{2} +
  \sqrt{\frac{1}{2} + \cdots + \frac{1}{2} \sqrt{\frac{1}{2}} }}}
  \prod_{n=1}^{\infty} \frac{2^{p+1}n-1}{2^{p+1}n}
  \cdot \frac{2^{p+1}n+1}{2^{p+1}n},
\]
where the $n$th term in the first product has $n$ radical signs.
This equation becomes Wallis's product when $p=0$ and Vieta's formula as
$p \to \infty$. It is surprising that such a connection between the two products
was not discovered earlier.

The collection \cite{pi-formulas} contains both original papers of Vieta and
Wallis as well as  other fundamental
papers in the history of $\pi$. Indeed, there are many good historical sources on
$\pi$. The  text by P. Eymard and J. P. Lafon \cite{eymard-lafon} is an
excellent place to start.

Wallis's formula (\ref{wallis-0}) is equivalent to
\begin{equation}\label{wallis2}
W_n := \prod_{k=1}^{n} \frac{(2k) \cdot (2k)}{(2k-1) \cdot (2k+1)} =
\frac{2^{4n}}{\binom{2n}{n} \, \binom{2n+1}{n} \, (n+1) } \to \frac{\pi}{2}
\end{equation}
as $n \to \infty$. This may be established using Stirling's approximation
\[ m! \sim  \sqrt{2 \pi m} \left( \frac{m}{e} \right)^{m}. \]
Alternatively, there are many elementary proofs of (\ref{wallis2}) in the
literature. Among them, \cite{wastlund} and \cite{levrie-daems} have
recently appeared in this journal.

Section \ref{sec-wallis-pro} presents a proof of (\ref{wallis2}) based on
the evaluation of the rational integral
\begin{equation}\label{wallis1}
G_n := \frac{2}{\pi} \int_{0}^{\infty} \frac{dx}{(x^{2}+1)^n}.
\end{equation}
This integral is discussed in the next section.
The motivation to generalize (\ref{wallis1}) has produced interesting links
to symmetric functions from combinatorics and to
one-loop Feynman diagrams from particle physics.
The goal of this work is to present these connections.

\section{A rational integral and its trigonometric version}\label{sec-ratio}

The method of partial fractions reduces the integration of a rational function
to an algebraic problem: the factorization of its denominator. The integral 
(\ref{wallis1}) corresponds to the presence of purely imaginary poles. See
\cite{irrbook} for a treatment of these ideas. 

A recurrence for $G_n$ is
obtained by writing $1 = (x^{2}+1)-x^{2}$ for the numerator of (\ref{wallis1})
and integrating by parts. The result is
\begin{equation}\label{recur-Q}
G_{n+1} = \frac{2n-1}{2n} G_n.
\end{equation}
Since $G_1=1$ it follows that
\begin{equation}\label{qone}
G_{n+1} = \frac{1}{2^{2n}} \binom{2n}{n}.
\end{equation}

The choice of a new variable is one of the fundamental tools
in the evaluation of definite integrals. The new variable, if
carefully chosen, usually simplifies the problem or opens up
unsuspected possibilities. Trigonometric changes of variables are considered {\em elementary}
because these functions appear early in the scientific training. Unfortunately,
this hides the fact that this change of variables introduces a
{\em transcendental function} with a multivalued inverse. One has to
proceed with care.

The change of variables $x = \tan \theta$ in the definition (\ref{wallis1}) of $G_n$
gives
\[ G_{n+1} =  \frac{2}{\pi} \int_{0}^{\pi/2} ( \cos \theta)^{2n} \, d \theta. \]
In this context, the recurrence (\ref{recur-Q}) is obtained by writing
\[
  (\cos \theta)^{2n} = (\cos \theta )^{2n-2} +
  \frac{\sin \theta}{2n-1} \frac{d}{d \theta} (\cos \theta)^{2n-1}
\]
and then integrating by parts. Yet another recurrence for $G_n$ is obtained by a double-angle
substitution yielding
\[
  G_{n+1} = \frac{2}{\pi} \int_{0}^{\pi/2}
  \left( \frac{1+ \cos 2 \theta}{2} \right)^{n} \, d \theta,
\]
and a binomial expansion (observe that the odd powers of
cosine integrate to zero). It follows that
\[
  G_{n+1} = 2^{-n} \sum_{k=0}^{ \lfloor{n/2 }\rfloor} \binom{n}{2k} G_{k+1}.
\]
Thus, (\ref{qone}) is equivalent to proving the finite sum identity
\begin{equation}\label{finite-sum}
\sum_{k=0}^{\left\lfloor{n/2 }\right\rfloor}
2^{-2k} \binom{n}{2k} \binom{2k}{k}
= 2^{-n} \binom{2n}{n}.
\end{equation}
There are many possible ways to prove this identity. For instance, it
is a perfect candidate for the truly $21$st-century WZ method
\cite{aequalsb} that provides automatic proofs; or, as pointed out by M. Hirschhorn in \cite{hirschhorn2}, it
is a disguised form of the Chu-Vandermonde identity
\begin{equation}
\sum_{k \geq 0} \binom{x}{k} \binom{y}{k} = \binom{x+y}{x}
\label{chu-van}
\end{equation}
(which was discovered first in $1303$ by Zhu Shijie). Namely, upon employing Legendre's
duplication formula for the gamma function
\[ \Gamma( \tfrac{1}{2} ) \Gamma(2z+1) =
   2^{2z} \Gamma(z+1) \Gamma(z + \tfrac{1}{2} ), \]
the identity (\ref{finite-sum}) can be rewritten as
\[ \sum_{k \geq 0} \binom{\tfrac{n}{2}}{k}
   \binom{\tfrac{n}{2}- \tfrac{1}{2}}{k} =
   \binom{n- \tfrac{1}{2}}{\tfrac{n}{2}- \tfrac{1}{2}}. \]
This is a special case of (\ref{chu-van}). Another, particularly nice and direct,
proof of (\ref{finite-sum}), as kindly pointed out by one of the referees,
is obtained from looking at the constant coefficient of
\[ \left(\frac{x}{2}+\frac{x^{-1}}{2}+1\right)^n = 2^{-n} \left(x^{1/2}+x^{-1/2}\right)^{2n}. \]

\begin{remark}
The idea of double-angle reduction lies at the heart of the {\em rational
Landen transformations}.  These are polynomial maps on the coefficient of the
integral of a rational function that preserve its value. See \cite{manna-moll3}
for a survey on Landen transformations and open questions.
\end{remark}

\section{A squeezing method}\label{sec-wallis-pro}

In this section we employ the explicit expression for $G_n$
given in (\ref{qone}) to establish Wallis's formula (\ref{wallis-0}). This
approach is also contained in Stewart's calculus text book \cite{stewart1} in the form of
several guided exercises (45, 46, and 68 of Section 7.1). The
proof is based on analyzing the integrals
\[ I_n := \int_{0}^{\pi/2} (\sin x)^n \, dx. \]
The formula
\[ I_{2n} = \int_{0}^{\pi/2} (\sin x)^{2n} \, dx = \frac{(2n-1)!!}{(2n)!!} \frac{\pi}{2} \]
follows from (\ref{qone}) by symmetry. Its companion integral
\[ I_{2n+1} = \int_{0}^{\pi/2} (\sin x)^{2n+1} \, dx = \frac{(2n)!!}{(2n+1)!!} \]
is of the same flavor. Here
$n!! = n(n-2)(n-4) \cdots \, \{ 1 \text{ or } 2 \}$ denotes
the double factorial. The ratio of these two integrals gives
\[ W_n I_{2n}/I_{2n+1} = \frac{\pi}{2}, \]
where $W_n$ is defined by (\ref{wallis2}). The convergence
of $W_n$ to $\pi/2$ now follows from the inequalities
$1 \leq I_{2n}/I_{2n+1} \leq 1 + 1/(2n)$.  The first of these inequalities is
equivalent to $I_{2n+1} \leq I_{2n}$, which holds because $(\sin x)^{2n+1} \leq (\sin x)^{2n}$.
The second is equivalent to
\[
  2n \int_{0}^{\pi/2} (\sin x)^{2n} \, dx \leq (2n+1) \int_{0}^{\pi/2}
  (\sin x)^{2n+1}\, dx,
\]
which follows directly from the bound
$I_{2n} \leq I_{2n-1}$ and the recurrence $(2n+1) I_{2n+1} = 2n I_{2n-1}$.
Alternatively, the second inequality can be proven by observing that the function
\[ f(s) = s \int_{0}^{\pi/2} (\sin x)^{s}\, dx \]
is increasing. This may be seen from the change of variables $t = \sin x$ and
a series expansion of the new integrand yielding
\begin{equation}\label{derivative}
f'(s) = \sum_{k=0}^{\infty} \frac{1}{2^{2k}} \binom{2k}{k}
\frac{2k+1}{(2k+s + 1)^{2}} > 0.
\end{equation}

\begin{remark}
Comparing the series (\ref{derivative}) at $s=0$ with the limit
\[
  f'(0) = \lim\limits_{s \to 0} \frac{f(s)}{s} =
  \lim\limits_{s \to 0} \int_{0}^{\pi/2} \sin^{s} x \, dx = \frac{\pi}{2}
\]
immediately proves
\[ \sum_{k=0}^{\infty} \binom{2k}{k} \frac{2^{-2k}}{2k+1} = \frac{\pi}{2}. \]
This value may also be obtained by letting $x=\tfrac{1}{2}$ in the series
\[
  \sum_{k=0}^{\infty} \binom{2k}{k} \frac{x^{2k}}{2k+1} =
  \frac{\text{arcsin} 2x }{2x}.
\]
The reader will find in \cite{lehmer85} a host of other interesting series
that involve  the central binomial coefficients.
\end{remark}

\section{An example of Ramanujan and a generalization}\label{sec-rama}

A natural generalization of Wallis's integral (\ref{wallis1}) is given by
\begin{equation}\label{wallis-3}
G_{n}(\boldsymbol{q}) = \frac{2}{\pi} \int_0^\infty \prod_{k=1}^{n} \frac{1}{x^2+q_k^2} \, dx,
\end{equation}
where $\boldsymbol{q} = (q_1, q_2, \ldots, q_n)$ with $q_k \in \mathbb{C}$. This
notation will be employed throughout. Similarly, $\boldsymbol{q}^\alpha$ is used to denote
$(q_1^\alpha, q_2^\alpha, \ldots, q_n^\alpha)$. As the value of 
the integral (\ref{wallis-3}) is fixed under a change of 
sign of the parameters $q_k$, it is assumed that $\RealPart{q_k} > 0$.  Note that 
the integral $G_n(\boldsymbol{q})$ is a symmetric function of 
$\boldsymbol{q}$ that
reduces to $G_n$ in the special case $q_1 = \cdots = q_n = 1$.

The special case $n=4$ appears as Entry 13, Chapter 13 of volume 2 of B. Berndt's
{\it Ramanujan's Notebooks} \cite{berndtII}, in the form\footnote{A minor correction from \cite{berndtII}.}:

\begin{example}
Let $q_1$, $q_2$, $q_3$, and $q_4$ be positive real numbers. Then
\begin{multline}
\nonumber
\frac{2}{\pi} \int_{0}^{\infty} \frac{dx}{(x^{2} + q_{1}^{2})
(x^{2} + q_{2}^{2})(x^{2} + q_{3}^{2})
(x^{2} + q_{4}^{2})} = \\
\frac{( q_{1}+q_{2}+q_{3}+q_{4})^{3} -
(q_{1}^{3} + q_{2}^{3} + q_{3}^{3} + q_{4}^{3}) }
{3q_{1}q_{2}q_{3}q_{4} (q_{1}+ q_{2})
(q_{2}+q_{3})(q_{1}+q_{3})(q_{1}+q_{4})
(q_{2}+q_{4})(q_{3}+q_{4})}.
\end{multline}
\end{example}

Using partial fractions the following general formula for $G_{n}(\boldsymbol{q})$
is obtained. In the next section a representation in terms of Schur functions is
presented.

\begin{lemma}\label{Gnqpartialfractions}
Let $\boldsymbol{q} = (q_1, \ldots, q_n)$ be distinct and $\RealPart{q_k} > 0$. Then
\begin{equation}\label{formula-0}
G_{n}(\boldsymbol{q}) =
\sum_{k=1}^{n} \frac{1}{q_k}
 \mathop{\prod_{j=1}^n}_{j \neq k} \frac{1}{q_j^2-q_k^2}.
\end{equation}
\end{lemma}

\begin{proof}
Observe first that if $b_1,b_2, \ldots, b_n$ are distinct then
\begin{equation}
\prod_{k=1}^{n} \frac{1}{y + b_{k}} = \sum_{k=1}^{n} \frac{1}{y+b_{k}} \mathop{\prod_{j=1}^n}_{j \neq k} \frac{1}{b_{j}-b_{k}}.
\label{pf-1}
\end{equation}
Replacing $y$ by $x^{2}$ and $b_{k}$ by $q_{k}^{2}$ and using the
elementary integral
\[ \frac{2}{\pi} \int_{0}^{\infty} \frac{dx}{x^{2} + q^{2}} = \frac{1}{q} \]
produces the desired evaluation of $G_{n}(\boldsymbol{q})$.
\end{proof}

\begin{remark}
$G_{n}(\boldsymbol{q})$,  as defined by (\ref{wallis-3}), 
 is a symmetric function in the
$q_i$'s which remains finite if two of these parameters coincide. Therefore, the factors
$q_j-q_k$ in the denominator of the right-hand side of (\ref{formula-0}) cancel out.
This may be checked directly by combining the summands corresponding to $j$ and $k$.
Alternatively, note that the right-hand side of (\ref{formula-0}) is symmetric while the
critical factors $q_j-q_k$ in the denominator combine to form the antisymmetric Vandermonde
determinant. Accordingly, they have to cancel.
\end{remark}

\begin{example}\label{eg-specializations}
The identities
\begin{align*}
  \frac{2}{\pi} \int_{0}^{\infty} \prod_{j=1}^{n+1} \frac{1}{x^2+j^2}\, dx
   &= \frac{1}{(2n+1)n!(n+1)!},\\
  \frac{2}{\pi} \int_{0}^{\infty} \prod_{j=1}^{n+1} \frac{1}{x^2+(2j-1)^2} \, dx
   &= \frac{1}{2^{2n}(2n+1)(n!)^2},\\
  \frac{2}{\pi} \int_{0}^{\infty} \prod_{j=1}^{n} \frac{1}{x^{2} + 1/j^{2}} \, dx
   &= \frac{2A(2n-1,n-1)}{\binom{2n}{n}}
\end{align*}
may be deduced inductively from Lemma \ref{Gnqpartialfractions}. Here, $A(n,k)$ are the Eulerian numbers which count
the number of permutations of $n$ objects with exactly $k$ descents. Recall that a permutation
$\sigma$ of the $n$ letters ${1,2,\ldots,n}$, here written as $\sigma(1)\,\sigma(2)\,\ldots\,\sigma(n)$,
has a descent at position $k$ if $\sigma(k)>\sigma(k+1)$. For
instance, $A(3,1)=4$ because there are $4$ permutations of $1,2,3$, namely $1\,3\,2$,
$2\,1\,3$, $2\,3\,1$, and $3\,1\,2$, which have exactly one
descent.
\end{example}

The problem of finding an explicit formula for the numerator appearing on
the right-hand side of (\ref{formula-0}) when put on the lowest common denominator is discussed in the next section.

\section{Representation in terms of Schur functions}\label{sec-schur}

The expression for $G_{n}(\boldsymbol{q})$ developed in
this section is given in terms of {\em Schur functions}. The reader is referred to \cite{bressoud1}
for a motivated introduction to these functions in the context of
{\em alternating sign matrices} and to \cite{sagan-book1} for their role in the
representation theory of the symmetric group. Among the many equivalent
definitions for Schur functions, we
now recall their definition in terms of quotients of alternants. Using this approach,
we are able to associate a Schur function not only to a partition
but more generally to an arbitrary vector.

Here, a vector $\mu = (\mu_1, \mu_2, \ldots)$ means a finite sequence of real
numbers. $\mu$ is further called a partition (of $m$) if $\mu_1 \ge \mu_2
\ge \cdots$ and all the parts $\mu_j$ are positive integers (summing up to $m$).
Write $\boldsymbol{1}^n$ for the partition with $n$ ones, and denote by
$\lambda(n)$ the partition
\[ \lambda(n) = (n-1, n-2, \ldots, 1). \]
Vectors and partitions may be added componentwise. In case they are of
different length, the shorter one is padded with zeroes. For instance, one has
$\lambda(n+1) = \lambda(n) + \boldsymbol{1}^n$. Likewise, vectors and
partitions may be multiplied by scalars. In particular, $a \cdot
\boldsymbol{1}^n$ is the partition with $n$ $a$'s.

Fix $n$ and consider $\boldsymbol{q}= (q_1, q_2, \ldots, q_n)$. Let $\mu =
(\mu_1, \mu_2, \ldots)$ be a vector of length at most $n$. The corresponding
alternant $a_{\mu}$ is defined as the determinant
\[ a_{\mu} (\boldsymbol{q}) = \left| q_i^{\mu_j} \right|_{1 \le i, j
   \le n} . \]
Again, $\mu$ is padded with zeroes if necessary. Note that the alternant
$a_{\lambda (n)}$ is the classical Vandermonde determinant
\[ a_{\lambda (n)} (\boldsymbol{q}) = \left| q_i^{n - j} \right|_{1 \le i,
   j \le n} = \prod_{1 \le i < j \le n} (q_i - q_j). \]
The Schur function $s_{\mu}$ associated with the vector $\mu$ can now be
defined as
\[ s_{\mu} (\boldsymbol{q}) = \frac{a_{\mu + \lambda(n)}
   (\boldsymbol{q})}{a_{\lambda(n)} (\boldsymbol{q})} . \]
If $\mu$ is a partition with integer entries this is a symmetric polynomial.
Indeed, as $\mu$ ranges over the partitions of $m$ of length at most $n$, the
Schur functions $s_{\mu} (\boldsymbol{q})$ form a basis for the homogeneous
symmetric polynomials in $\boldsymbol{q}$ of degree $m$.

The Schur functions include as special cases the {\em elementary symmetric
functions} $e_k$ and the {\em complete homogeneous symmetric functions} $h_k$.
Namely, $e_k(\boldsymbol{q}) = s_{\boldsymbol{1}^k}(\boldsymbol{q})$ and $h_k
(\boldsymbol{q}) = s_{(k)}(\boldsymbol{q})$.

The next result expresses the integral $G_n(\boldsymbol{q})$ defined in
(\ref{wallis-3}) as a quotient of Schur functions.

\begin{theorem}
\label{thm-gn}
Let $\boldsymbol{q} = (q_1, \ldots, q_n)$ and $\RealPart{q_k} > 0$. Then
\begin{equation}\label{gn-def}
G_{n}(\boldsymbol{q}) = \frac{s_{\lambda(n-1)}(\boldsymbol{q})}{s_{\lambda(n+1)}(\boldsymbol{q})} =
\frac{s_{\lambda(n-1)}(\boldsymbol{q})}{e_{n}(\boldsymbol{q}) s_{\lambda(n)}(\boldsymbol{q})}.
\end{equation}
\end{theorem}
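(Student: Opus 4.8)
The plan is to reduce the claimed identity to a polynomial statement about alternants and then verify it by a cofactor expansion. Since $s_{\lambda(m)}(\boldsymbol{q}) = a_{\lambda(m)+\lambda(n)}(\boldsymbol{q})/a_{\lambda(n)}(\boldsymbol{q})$ for each $m$, the common Vandermonde denominator $a_{\lambda(n)}(\boldsymbol{q})$ cancels, so both asserted equalities follow once I show
\[ a_{\lambda(n+1)+\lambda(n)}(\boldsymbol{q}) = e_n(\boldsymbol{q})\,a_{2\lambda(n)}(\boldsymbol{q}) \quad\text{and}\quad G_n(\boldsymbol{q})\,a_{\lambda(n+1)+\lambda(n)}(\boldsymbol{q}) = a_{\lambda(n-1)+\lambda(n)}(\boldsymbol{q}). \]
My starting point for the second, harder identity is the partial fraction formula (\ref{formula-0}) from Lemma \ref{Gnqpartialfractions}.

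First I would dispose of the denominator and the second equality together. Adding componentwise gives $\lambda(n+1)+\lambda(n) = (2n-1,2n-3,\ldots,3,1)$, so $a_{\lambda(n+1)+\lambda(n)}(\boldsymbol{q}) = \left| q_i^{2n-2j+1} \right|$; factoring one power of $q_i$ out of each row produces $e_n(\boldsymbol{q})\left|(q_i^2)^{\,n-j}\right| = e_n(\boldsymbol{q})\prod_{i<j}(q_i^2-q_j^2)$. The same row-factoring applied to $2\lambda(n)=(2n-2,\ldots,2,0)$ gives $a_{2\lambda(n)}(\boldsymbol{q})=\prod_{i<j}(q_i^2-q_j^2)$, which is exactly the first identity and hence the equality $s_{\lambda(n+1)}=e_n\,s_{\lambda(n)}$.

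For the main identity, write $V_2 = \prod_{i<j}(q_i^2-q_j^2)$ and multiply (\ref{formula-0}) through by $e_n(\boldsymbol{q})V_2$; the $k$th summand becomes $\left(\prod_{m\neq k}q_m\right)V_2\big/\prod_{j\neq k}(q_j^2-q_k^2)$. Isolating in $V_2$ the factors that involve the index $k$, a short sign count shows $V_2\big/\prod_{j\neq k}(q_j^2-q_k^2) = (-1)^{n-k}V_2^{(k)}$, where $V_2^{(k)}$ is the Vandermonde in the squares of the variables other than $q_k$. Restoring one power of $q_i$ per row identifies $\left(\prod_{m\neq k}q_m\right)V_2^{(k)}$ with the $(n-1)\times(n-1)$ minor obtained by deleting row $k$ and the last column from the matrix whose column exponents are $(2n-3,2n-5,\ldots,3,1,0)$. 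I would then recognize the whole sum $\sum_k(-1)^{n-k}\left(\prod_{m\neq k}q_m\right)V_2^{(k)}$ as the Laplace expansion of that $n\times n$ determinant along its final, all-ones column, and since $\lambda(n-1)+\lambda(n) = (2n-3,2n-5,\ldots,3,1,0)$ has last entry $0$, this determinant is precisely $a_{\lambda(n-1)+\lambda(n)}(\boldsymbol{q})$.

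The step I expect to be the main obstacle is the sign bookkeeping: verifying that the $(-1)^{n-k}$ coming from the rearrangement of the factors of $V_2$ matches the $(-1)^{k+n}$ cofactor sign in the Laplace expansion (they agree because $(-1)^{n-k}=(-1)^{n+k}$), and checking that the exponent-$0$ column — the all-ones column that makes the expansion collapse cleanly — is genuinely the last entry of $\lambda(n-1)+\lambda(n)$. With the signs reconciled the two sides agree term by term, and dividing by $a_{\lambda(n)}(\boldsymbol{q})$ delivers the Schur-function quotient in (\ref{gn-def}).
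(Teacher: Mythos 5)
Your proposal is correct and is essentially the paper's own proof: both establish $s_{\lambda(n+1)} = e_n s_{\lambda(n)}$ by factoring one power of $q_i$ out of each row, and both rest on the cofactor expansion of $a_{\lambda(n-1)+\lambda(n)}$ along its all-ones last column together with the sign identity $V_2/\prod_{j\neq k}(q_j^2-q_k^2) = (-1)^{n-k}V_2^{(k)}$ and Lemma \ref{Gnqpartialfractions}. The only difference is direction — the paper expands the Schur quotient down to the partial-fraction formula, while you reassemble the partial-fraction formula up into the determinant — which is the same argument read backwards.
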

\begin{proof}
The equality $e_{n}(\boldsymbol{q}) s_{\lambda(n)}(\boldsymbol{q})=
s_{\lambda(n+1)}(\boldsymbol{q})$ amounts to the identity
\[ q_{1}q_{2}\cdots q_{n} \left| q_{i}^{2n-2j} \right|_{i,j}
   = \left| q_{i}^{2n-2j+1} \right|_{i,j}, \]
which follows directly by inserting the factor $q_i$ into row $i$ of the matrix.

From the previous definition of Schur functions, the right-hand side
of (\ref{gn-def}) becomes
\[
  \frac{s_{\lambda(n-1)}(\boldsymbol{q}) }{e_n(\boldsymbol{q}) \, s_{\lambda(n)}(\boldsymbol{q}) }
  = \frac{a_{\lambda(n-1)+ \lambda(n)}(\boldsymbol{q})}{e_n(\boldsymbol{q}) a_{2 \lambda(n)}(\boldsymbol{q})}.
\]
Observe that $a_{2 \lambda(n)}(\boldsymbol{q}) = |q_{i}^{2n-2j}|_{i,j} = a_{\lambda(n)}(\boldsymbol{q}^2)$
is simply the Vandermonde determinant with $q_i$ replaced
by $q_i^2$. Next, expand the determinant $a_{\lambda(n-1) + \lambda(n)}$ by the
last column (which consists of $1$'s only) to find
\begin{equation*}
a_{\lambda(n-1)+\lambda(n)}(\boldsymbol{q}) = e_{n}(\boldsymbol{q}) \sum_{k=1}^n
 \frac{(-1)^{n-k}}{q_{k}}  a_{\lambda(n-1)}(q_{1}^{2}, q_{2}^{2}, \ldots,
q_{k-1}^{2}, q_{k+1}^{2}, \ldots, q_{n}^{2}).
\end{equation*}
Therefore
\begin{equation}\label{mess}
\frac{a_{\lambda(n-1)+\lambda(n)}(\boldsymbol{q})}{e_{n}(\boldsymbol{q}) \, a_{2 \lambda(n)}(\boldsymbol{q})} =
\sum_{k=1}^{n} \frac{(-1)^{n-k}}{q_{k}}
\mathop{\prod_{i<j}}_{i,j \neq k} (q_{i}^{2} - q_{j}^{2})
\Big{/}
\prod_{i<j} (q_{i}^{2} - q_{j}^{2}).
\end{equation}
Observe that the only terms that do not cancel in the
quotient above are those for which $i=k$ or $j=k$. The change of
sign required to transform the factors $q_{k}^{2}-q_{j}^{2}$ to
$q_{j}^{2}-q_{k}^{2}$ eliminates the factor $(-1)^{n-k}$. The
expression on the right-hand side of (\ref{mess}) is precisely
the value (\ref{formula-0}) of the integral $G_{n}(\boldsymbol{q})$ produced by
partial fractions.
\end{proof}

The next example illustrates Theorem \ref{thm-gn} with the principal
specialization of the parameters $\boldsymbol{q}$.

\begin{example}
The special case $q_k = q^k$ produces the evaluation
\begin{equation}\label{nice}
\frac{2}{\pi} \int_{0}^{\infty} \prod_{k=1}^n \frac{1}{x^2+q^{2k}}
= \frac{1}{q^{n^2}} \prod_{j=1}^{n-1} \frac{1-q^{2j-1}}{1-q^{2j}}.
\end{equation}
This can be obtained inductively from Lemma \ref{Gnqpartialfractions} but may also be
derived from Theorem \ref{thm-gn} in combination with the evaluation (\ref{eq-schurprincipal}) of the principal
specialization of Schur functions as in Theorem 7.21.2 of \cite{stanley-vol2}.

Taking the limit $q \to 1$ in (\ref{nice}) reproduces formula (\ref{qone}) for $G_n$.
In other words, (\ref{nice}) is a $q$-analog \cite{gasper1}
of (\ref{qone}). Similarly,
\[ \frac{\pi_q}{1 + q} = q^{1 / 4} \prod_{n=1}^\infty \frac{1 - q^{2 n}}{1
   - q^{2 n - 1}}  \frac{1 - q^{2 n}}{1 - q^{2 n + 1}} \]
is a useful $q$-analog of Wallis's formula (\ref{wallis2}) which naturally
appears in \cite{gosper1}, where Gosper studies $q$-analogs of trigonometric
functions (in fact, Gosper arrives at the above expression as a definition
for $\pi_q$ while $q$-generalizing the reflection formula $\Gamma(z)\Gamma(1-z) = \tfrac{\pi}{\sin \pi z}$).
\end{example}

The proof of Theorem \ref{thm-gn} extends to the following more general 
result.

\begin{lemma}\label{thm-gn-gen}
\[
  \sum_{k=1}^n \frac{1}{q_k^{\alpha-\beta}} \mathop{\prod_{j=1}^n}_{j \ne k} \frac{1}{q_j^\alpha - q_k^\alpha}
  = \frac{s_{\lambda}(\boldsymbol{q})}{s_{\mu}(\boldsymbol{q})},
\]
where
\begin{eqnarray*}
  \lambda & = & (\alpha-1) \cdot \lambda(n) - \beta \cdot \boldsymbol{1}^{n-1},\\
  \mu     & = & (\alpha-1) \cdot \lambda(n+1) - (\beta-1) \cdot \boldsymbol{1}^{n}.
\end{eqnarray*}
\end{lemma}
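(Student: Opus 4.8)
The plan is to follow the proof of Theorem~\ref{thm-gn} almost verbatim, carrying the parameters $\alpha$ and $\beta$ through each step; that theorem is exactly the case $\alpha = 2$, $\beta = 1$, for which $\lambda = \lambda(n-1)$ and $\mu = \lambda(n+1)$. The key structural fact to exploit is that the paper's definition of $s_\nu$ via the alternant quotient $s_\nu(\boldsymbol{q}) = a_{\nu+\lambda(n)}(\boldsymbol{q})/a_{\lambda(n)}(\boldsymbol{q})$ makes sense for arbitrary vectors $\nu$, so the right-hand side is well-defined for general $\alpha,\beta$ as soon as the $q_k$ are distinct. First I would cancel the common Vandermonde denominators to write
\[ \frac{s_\lambda(\boldsymbol{q})}{s_\mu(\boldsymbol{q})} = \frac{a_{\lambda+\lambda(n)}(\boldsymbol{q})}{a_{\mu+\lambda(n)}(\boldsymbol{q})}, \]
and then compute the shifted exponent vectors directly: the $j$-th entry of $\mu+\lambda(n)$ works out to $\alpha(n+1-j)-\beta$ for $1 \le j \le n$, while the $j$-th entry of $\lambda+\lambda(n)$ is $\alpha(n-j)-\beta$ for $1 \le j \le n-1$ with last entry $0$. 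Thus the numerator alternant again carries a last column of ones, exactly as in Theorem~\ref{thm-gn}.

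Next I would evaluate both alternants by extracting a common row factor to expose a Vandermonde in the dilated variables $q_i^\alpha$. Pulling $q_i^{\alpha-\beta}$ out of row $i$ turns the denominator into
\[ a_{\mu+\lambda(n)}(\boldsymbol{q}) = \Big(\prod_i q_i^{\alpha-\beta}\Big)\,\big|q_i^{\alpha(n-j)}\big|_{i,j} = \Big(\prod_i q_i^{\alpha-\beta}\Big)\prod_{i<j}(q_i^\alpha - q_j^\alpha). \]
For the numerator I would expand $a_{\lambda+\lambda(n)}$ along its last column of ones; after factoring $q_i^{\alpha-\beta}$ from each surviving row, the cofactor of the $(k,n)$ entry becomes $\big(\prod_{i\ne k} q_i^{\alpha-\beta}\big)\prod_{i<j,\,i,j\ne k}(q_i^\alpha - q_j^\alpha)$, a Vandermonde in the $q_i^\alpha$ with the index $k$ deleted.

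Finally I would form the quotient term by term. Dividing the $k$-th cofactor by the denominator collapses the row factors to $1/q_k^{\alpha-\beta}$ and the Vandermonde products to $1/\prod_{j\ne k}(q_j^\alpha - q_k^\alpha)$, up to sign. The step needing care, and the only place the argument could misfire, is the sign bookkeeping: the cofactor expansion contributes $(-1)^{n+k}$, while reorienting the $n-k$ factors $q_k^\alpha - q_j^\alpha$ (for $j>k$) into $q_j^\alpha - q_k^\alpha$ contributes $(-1)^{n-k}$, and these cancel to $+1$ since $(-1)^{n+k}(-1)^{n-k} = (-1)^{2n} = 1$, just as in Theorem~\ref{thm-gn}. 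Summing over $k$ then reproduces the left-hand side, completing the proof. The substantive observation is that $\alpha$ and $\beta$ enter only through the uniform row factor $q_i^{\alpha-\beta}$ and the dilation $q_i \mapsto q_i^\alpha$, so they never disturb the combinatorial skeleton already established for the Wallis integral.
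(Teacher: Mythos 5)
Your proof is correct and is precisely the paper's intended argument: the paper gives no separate proof of Lemma \ref{thm-gn-gen}, saying only that the proof of Theorem \ref{thm-gn} ``extends'' to it, and your parameterized reworking of that proof (last-column expansion of the numerator alternant, extraction of the row factors $q_i^{\alpha-\beta}$ to expose Vandermonde determinants in the variables $q_i^\alpha$, and the cancelling signs $(-1)^{n+k}(-1)^{n-k}=1$) is exactly that extension. Your exponent computations for $\lambda+\lambda(n)$ and $\mu+\lambda(n)$ are also correct, so nothing is missing.
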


As a consequence, one obtains the following integral evaluation which generalizes the evaluation of
$G_{n}(\boldsymbol{q})$ given in Theorem \ref{thm-gn}.

\begin{theorem}\label{thm-gnt}
Let $\boldsymbol{q} = (q_1, \ldots, q_n)$ and $\RealPart{q_k} > 0$. Further,
let $\alpha$ and $\beta$ be given such that $\alpha > 0$, $0 < \beta < \alpha n$, and $\beta$ is not an integer
multiple of $\alpha$. Then
\[
  G_{n,\alpha,\beta}(\boldsymbol{q}) := \frac{\sin(\pi\beta/\alpha)}{\pi/\alpha}
  \int_0^\infty  \frac{x^{\beta-1}}{\prod_{k=1}^n (x^\alpha+q_k^\alpha)} \, dx
  = \frac{s_{\lambda}(\boldsymbol{q})}{s_{\mu}(\boldsymbol{q})},
\]
where $\lambda$ and $\mu$ are as in Lemma \ref{thm-gn-gen}.
\end{theorem}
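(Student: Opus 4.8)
The plan is to reduce the evaluation to the algebraic identity already recorded in Lemma~\ref{thm-gn-gen}, exactly as Lemma~\ref{Gnqpartialfractions} was deduced from the partial fraction identity~(\ref{pf-1}). Since Lemma~\ref{thm-gn-gen} asserts
\[ \sum_{k=1}^n \frac{1}{q_k^{\alpha-\beta}} \mathop{\prod_{j=1}^n}_{j \ne k} \frac{1}{q_j^\alpha - q_k^\alpha} = \frac{s_\lambda(\boldsymbol q)}{s_\mu(\boldsymbol q)}, \]
it suffices to prove that the normalized integral $G_{n,\alpha,\beta}(\boldsymbol q)$ equals the sum on the left. First I would dispose of the one-factor case by establishing the elementary evaluation
\[ \frac{\sin(\pi\beta/\alpha)}{\pi/\alpha} \int_0^\infty \frac{x^{\beta-1}}{x^\alpha + q^\alpha}\,dx = \frac{1}{q^{\alpha-\beta}}, \qquad 0 < \beta < \alpha, \]
which follows from the substitution $u = (x/q)^\alpha$ together with the classical beta integral $\int_0^\infty u^{s-1}(1+u)^{-1}\,du = \pi/\sin(\pi s)$ (equivalently, the reflection formula for $\Gamma$), applied with $s=\beta/\alpha$. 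Feeding the partial fraction identity~(\ref{pf-1}) with $y = x^\alpha$ and $b_k = q_k^\alpha$ into the integral and integrating term by term then yields precisely the claimed formula.

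The main obstacle is that this term-by-term integration is legitimate only when $0 < \beta < \alpha$: in that range each summand $x^{\beta-1}/(x^\alpha+q_k^\alpha)$ is integrable on $(0,\infty)$, but for $\alpha \le \beta < \alpha n$ the individual pieces diverge at infinity even though their sum decays like $x^{\beta-1-\alpha n}$. I would overcome this by analytic continuation in $\beta$. The integral $\int_0^\infty x^{\beta-1}\prod_k(x^\alpha+q_k^\alpha)^{-1}\,dx$ converges and is holomorphic in the strip $0 < \RealPart\beta < \alpha n$ (differentiation under the integral sign, or Morera's theorem, is justified by uniform local bounds on the integrand), so the left-hand side is holomorphic there; the right-hand side $\sum_k q_k^{\beta-\alpha}\prod_{j\ne k}(q_j^\alpha-q_k^\alpha)^{-1}$ is an entire function of $\beta$. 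Having matched them on the subinterval $(0,\alpha)$, the identity theorem forces equality on the whole strip.

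Finally I would tidy up two points. The partial fraction step requires the numbers $q_k^\alpha$ to be distinct; since the integrand, and hence $G_{n,\alpha,\beta}$, is continuous in $\boldsymbol q$ and remains finite even when parameters collide, it is enough to prove the formula for $\boldsymbol q$ in general position and then pass to the limit. As for the hypothesis that $\beta$ not be an integer multiple of $\alpha$, its only role is to keep the prefactor $\sin(\pi\beta/\alpha)$ from vanishing; at $\beta = m\alpha$ with $0 < m < n$ both sides in fact vanish---the right-hand side because, writing $Q_k=q_k^\alpha$, the sum $\sum_k Q_k^{m-1}\prod_{j\ne k}(Q_j-Q_k)^{-1}$ is, up to sign, the complete homogeneous symmetric function $h_{m-n}(\boldsymbol Q)$, which is zero for $m<n$---so the evaluation in fact extends by continuity to those points as well, where it reads $0=0$.
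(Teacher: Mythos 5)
Your proof is correct, but it departs from the paper's at the one genuinely delicate point: how to reach the range $\alpha \le \beta < \alpha n$, where termwise integration of the plain partial fraction expansion (\ref{pf-1}) breaks down. The paper handles this algebraically rather than analytically: writing $\beta = b\alpha + \beta_1$ with $b$ an integer, $0 \le b < n$, and $0 < \beta_1 < \alpha$, it uses the decomposition
\[
  \frac{x^{b \alpha}}{\prod_{k=1}^{n} (x^\alpha+ q_{k}^\alpha)}
  = (-1)^b \sum_{k=1}^{n} \frac{q_k^{b \alpha}}{x^\alpha+q_{k}^\alpha} \prod_{j \neq k}
  \frac{1}{q_{j}^\alpha - q_{k}^\alpha},
\]
whose terms, multiplied by $x^{\beta_1-1}$, are each individually integrable; the sign $(-1)^b$ is absorbed by $\sin(\pi\beta/\alpha) = (-1)^b \sin(\pi\beta_1/\alpha)$, the exponents recombine via $q_k^{b\alpha}/q_k^{\alpha-\beta_1} = q_k^{\beta-\alpha}$, and Lemma \ref{thm-gn-gen} finishes exactly as in your argument. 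You instead prove the identity on $0 < \beta < \alpha$, where (\ref{pf-1}) suffices, and extend by analytic continuation in $\beta$ across the strip $0 < \RealPart \beta < \alpha n$. Both routes are sound, and they rest on the same three ingredients (the single-factor beta integral, a partial fraction decomposition, and Lemma \ref{thm-gn-gen}). The paper's version buys elementarity: no identity theorem, and no need to justify holomorphy of the parameter integral (your uniform local bounds tacitly also require $x^\alpha + q_k^\alpha$ to stay away from zero on $(0,\infty)$, a point the statement itself glosses over for complex $q_k$). Your version buys conceptual economy---only the simplest decomposition is needed---plus two dividends the paper leaves implicit: the limiting argument for coincident $q_k^\alpha$, and the observation that at the excluded values $\beta = m\alpha$ both sides vanish (your $h_{m-n}=0$ identity, which the paper itself invokes later, in Section \ref{sec-feynman}), so the exclusion is only there to keep the statement from degenerating to $0=0$.
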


\begin{proof}
Upon writing $\beta = b \alpha + \beta_1$ for $b<n$ a positive integer and $0<\beta_1<\alpha$,
the assertion follows from the partial fraction decomposition
\[
  \frac{x^{b \alpha}}{\prod_{k=1}^{n} (x^\alpha+ q_{k}^\alpha)}
  = (-1)^b \sum_{k=1}^{n} \frac{q_k^{b \alpha}}{x^\alpha+q_{k}^\alpha} \prod_{j \neq k}
  \frac{1}{q_{j}^\alpha - q_{k}^\alpha},
\]
the integral evaluation
\[ \int_{0}^{\infty} \frac{x^{\beta_1-1} dx}{x^\alpha + q^\alpha}
  = \frac{1}{q^{\alpha-\beta_1}} \, \frac{\pi/\alpha}{\sin(\pi\beta_1/\alpha)}, \]
and Lemma \ref{thm-gn-gen}.
\end{proof}

\section{Schur functions in terms of SSYT}\label{sec-ssyt}

The Schur function $s_{\lambda}(\boldsymbol{q})$ associated to a
partition $\lambda$
also admits
a representation in terms of {\em semi-standard Young tableaux} (SSYT). The
reader will find
information about this topic in \cite{bressoud1}. Given a partition
 $\lambda = (\lambda_1, \lambda_2, \ldots, \lambda_m)$, the Young diagram of shape $\lambda$
is an array of boxes, arranged in left-justified rows, consisting of $\lambda_{1}$ boxes in the first row, $\lambda_{2}$ in the second row,
and so on, ending with $\lambda_{m}$ boxes in the $m$th row.
A SSYT of shape $\lambda$ is a Young diagram of shape $\lambda$ in which
the boxes have been filled with positive integers.
These integers are restricted to be
weakly increasing across rows (repetitions are allowed) and strictly increasing
down columns.  From this point of view, the Schur function
$s_{\lambda}(\boldsymbol{q}) = s_{\lambda}(q_1, \ldots, q_n)$
can be defined as
\[ s_{\lambda}(\boldsymbol{q}) = \sum_{T} {\boldsymbol{q}}^{T}, \]
where the sum is over all SSYT of shape $\lambda$ with entries from
$\{1, 2, \ldots, n \}$. The symbol $\boldsymbol{q}^{T}$ is a monomial
in the variables $q_{j}$ in which the exponent of $q_{j}$ is the number
of appearances of $j$ in $T$. For example, the array shown in Figure
\ref{09-0469Fig1} is a tableau $T$ for the partition $(6, 4, 3, 3 )$. The
corresponding monomial $\boldsymbol{q}^{T}$
is given by $ q_{1}q_{2}^{3}q_{3}q_{4}^3 q_{5}^{4}q_{6}^{2}q_{7}q_{8}$.

\begin{figure}[h]
\begin{center}
\young(122455,2345,466,578)
\end{center}
\caption{A tableau $T$ for the partition $(6,4,3,3)$.}
\label{09-0469Fig1}
\end{figure}

The number $N_n(\lambda)$ of SSYT of shape $\lambda$ with entries from $\{1,2,\ldots,n\}$ can be
obtained by letting $q \to 1$ in the formula
\begin{equation}\label{eq-schurprincipal}
  s_{\lambda}(1,q,q^2, \ldots, q^{n-1}) = \prod_{1 \leq i<j \leq n} \frac{q^{\lambda_{i}+n-i}-q^{\lambda_{j}+n-j}}{q^{j-1}-q^{i-1}}
\end{equation}
(see page $375$ of \cite{stanley-vol2}). This yields
\begin{equation}\label{eq-nrofssyt}
N_n(\lambda) = \prod_{1 \leq i < j \leq n} \frac{\lambda_{i}-\lambda_{j}+j-i}{j-i}.
\end{equation}

The evaluation (\ref{qone}) of Wallis's integral (\ref{wallis1}) may be recovered
from here as
\[
  G_{n+1} = \frac{s_{\lambda(n)}(\boldsymbol{1}^{n+1})}
  {s_{\lambda(n+2)}(\boldsymbol{1}^{n+1})} = \frac{N_{n+1}(\lambda(n))}{N_{n+1}(\lambda(n+2))}
  = \frac{1} {2^{2n}} \binom{2n}{n}.
\]

\section{A counting problem}\label{sec-countingproblem}

The $k$-central binomial coefficients $c(n,k)$, defined
by the generating function
\[ (1-k^{2}x)^{-1/k} = \sum_{n \geq 0} c(n,k) x^{n}, \]
are given by
\[ c(n,k) = \frac{k^{n}}{n!} \prod_{m=1}^{n-1} (1 + km). \]
For $k=2$ these coefficients reduce to the central binomial coefficients $\tbinom{2n}{n}$.
The numbers $c(n,k)$ are integers in general and their divisibility properties have been
studied in \cite{ams-2}. In particular, the authors establish that the
$k$-central binomial coefficients are always divisible by $k$ and characterize
their $p$-adic valuations.

The next result attempts an (admittedly somewhat contrived) interpretation of what the numbers $-c(n,-k)$ count.

\begin{corollary}
Let $\lambda$ and $\mu$ be the partitions given by
\begin{eqnarray*}
  \lambda & = & (k-1) \cdot \lambda(n) - \boldsymbol{1}^{n-1},\\
  \mu     & = & (k-1) \cdot \lambda(n+1).
\end{eqnarray*}
Then the integer $-c(n,-k)$ enumerates the ratio between the total number of
SSYT of shapes $\lambda$ and $\mu$ with entries from $\{1,2,\ldots,n\}$ times the factor $k^{2n-1}/n$.
\end{corollary}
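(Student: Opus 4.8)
The plan is to specialize Theorem~\ref{thm-gnt} (equivalently Lemma~\ref{thm-gn-gen}) at the parameters $\alpha = k$ and $\beta = 1$, and then evaluate both sides at the principal specialization $\boldsymbol{q} = \boldsymbol{1}^n$ via the count of SSYT. With $\alpha = k$ and $\beta = 1$, the partitions in Lemma~\ref{thm-gn-gen} become exactly $\lambda = (k-1)\cdot\lambda(n) - \boldsymbol{1}^{n-1}$ and $\mu = (k-1)\cdot\lambda(n+1)$, matching the hypotheses of the corollary. Lemma~\ref{thm-gn-gen} then identifies the symmetric sum
\[
  S := \sum_{j=1}^n \frac{1}{q_j^{\,k-1}} \mathop{\prod_{i=1}^n}_{i \ne j} \frac{1}{q_i^{\,k} - q_j^{\,k}}
\]
with the ratio $s_{\lambda}(\boldsymbol{q})/s_{\mu}(\boldsymbol{q})$. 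Setting all $q_j = 1$ and using $s_{\lambda}(\boldsymbol{1}^n) = N_n(\lambda)$, the right-hand side becomes $N_n(\lambda)/N_n(\mu)$, which is the SSYT ratio in the statement.

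First I would establish the arithmetic side, namely that the left-hand sum $S$, when specialized at $\boldsymbol{q} = \boldsymbol{1}^n$, equals $-c(n,-k)/(k^{2n-1}/n)$; equivalently, that
\[
  \frac{N_n(\lambda)}{N_n(\mu)} = \frac{-c(n,-k)}{k^{2n-1}/n}.
\]
This is the heart of the matter. The cleanest route is probably to evaluate $N_n(\lambda)$ and $N_n(\mu)$ directly from the product formula~(\ref{eq-nrofssyt}). For $\mu = (k-1)\cdot\lambda(n+1)$ restricted to length $n$, the parts are $\mu_i = (k-1)(n-i)$, so $\mu_i - \mu_j = (k-1)(j-i)$ and~(\ref{eq-nrofssyt}) gives a clean product $\prod_{i<j}\big((k-1)(j-i)+(j-i)\big)/(j-i) = k^{\binom{n}{2}}$. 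For $\lambda$ the parts are $\lambda_i = (k-1)(n-i) - 1$ for $i \le n-1$ and $\lambda_n = 0$, and the same product formula yields a comparable but slightly more intricate expression because of the $-\boldsymbol{1}^{n-1}$ shift. Taking the quotient and simplifying, I expect the telescoping to collapse to the explicit form $c(n,k) = \tfrac{k^n}{n!}\prod_{m=1}^{n-1}(1+km)$ evaluated at $-k$, with the stated prefactor $k^{2n-1}/n$ absorbing the powers of $k$ from the two Vandermonde-type products.

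The main obstacle is the bookkeeping in matching the product~(\ref{eq-nrofssyt}) against the closed form of $c(n,-k)$: one must verify that $-c(n,-k) = \tfrac{(-k)^n}{n!}\big(-\prod_{m=1}^{n-1}(1-km)\big)$ reproduces precisely the rational function in $k$ coming from $\tfrac{k^{2n-1}}{n}\cdot N_n(\lambda)/N_n(\mu)$, including signs. Here the hypothesis that $\beta = 1$ is not a multiple of $\alpha = k$ (i.e.\ $k \neq 1$) is what keeps the partitions and the integral well-defined. A useful sanity check, which I would include, is the case $k = 2$: then $c(n,-2)$ relates to the central binomial coefficient and the ratio should reduce to the Wallis evaluation~(\ref{qone}) recovered at the end of Section~\ref{sec-ssyt}, confirming both the normalization and the sign conventions before stating the general identity.
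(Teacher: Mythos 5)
Your proposal is correct, but its decisive step differs from the paper's. The paper also specializes Theorem \ref{thm-gnt} at $\alpha=k$, $\beta=1$, $\boldsymbol{q}=\boldsymbol{1}^n$, but then evaluates the resulting one-dimensional integral in closed form,
\[
  \frac{N_n(\lambda)}{N_n(\mu)} = G_{n,k,1}(\boldsymbol{1}^n)
  = \frac{\sin(\pi/k)}{\pi/k}\int_0^\infty\frac{dx}{(x^k+1)^n}
  = \frac{\Gamma(n-\tfrac{1}{k})}{\Gamma(n)\,\Gamma(1-\tfrac{1}{k})}
  = \prod_{m=1}^{n-1}\frac{km-1}{km},
\]
and finally matches this product against $-c(n,-k)$. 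You instead compute the SSYT counts directly from the product formula (\ref{eq-nrofssyt}), bypassing the integral entirely; this works and is, if anything, more elementary. Concretely, $N_n(\mu)=k^{\binom{n}{2}}$ as you say, and for $\lambda$ the pairs $i<j\le n-1$ each contribute a factor $k$ while the pairs $(i,n)$ contribute $\bigl(k(n-i)-1\bigr)/(n-i)$, so
\[
  N_n(\lambda) = k^{\binom{n-1}{2}}\prod_{m=1}^{n-1}\frac{km-1}{m},
  \qquad
  \frac{N_n(\lambda)}{N_n(\mu)} = \prod_{m=1}^{n-1}\frac{km-1}{km},
\]
and multiplying by $k^{2n-1}/n$ gives $\tfrac{k^n}{n!}\prod_{m=1}^{n-1}(km-1) = -c(n,-k)$, which completes the ``bookkeeping'' you deferred. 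Note that your route never actually uses Theorem \ref{thm-gnt}: the corollary as stated is a purely arithmetic identity about tableau counts, and the direct computation proves it outright; what the paper's route buys is precisely the link to the integral $G_{n,k,1}$, which is why the statement is a corollary of Theorem \ref{thm-gnt} at all. Two small cautions about your write-up: the parts of $\mu$ are $(k-1)(n+1-i)$, not $(k-1)(n-i)$ --- harmless, since only the differences $\mu_i-\mu_j$ enter (\ref{eq-nrofssyt}) --- and the partial-fraction sum $S$ cannot literally be ``specialized at $\boldsymbol{q}=\boldsymbol{1}^n$'' since each of its terms is singular there; one must pass to the Schur-function ratio (or the integral) before setting the $q_j$ equal, which is what your computation in effect does.
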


\begin{proof}
By Theorem \ref{thm-gnt},
\[
  \frac{N_n(\lambda)}{N_n(\mu)} = \frac{s_{\lambda}(\boldsymbol{1}^{n})}{s_{\mu}(\boldsymbol{1}^n)} = G_{n,k,1}(\boldsymbol{1}^n) = \frac{\sin(\pi/k)}{\pi/k} \int_0^\infty  \frac{1}{(x^k + 1)^n} \, dx,
\]
and the expression on the right-hand side is routine to evaluate:
\[ \frac{N_n(\lambda)}{N_n(\mu)} = \frac{\Gamma(n-\tfrac{1}{k})}{\Gamma(n) \Gamma(1-\tfrac{1}{k})} = \prod_{m=1}^{n-1} \frac{km-1}{km}. \]
This product equals $-c(n,-k)$ divided by $k^{2n-1}/n$.
\end{proof}

\begin{remark}
R. Stanley pointed out some interesting Schur function quotient results. See 
exercises $7.30$ and $7.32$ in \cite{stanley-vol2}.
\end{remark}

\section{An integral from Gradshteyn and Ryzhik}\label{final-gr}

It is now demonstrated how the previous results may be used to prove an
integral evaluation found as entry $3.112$ in \cite{gr}. The main tool is
the (dual) Jacobi-Trudi identity which expresses a Schur function in terms of elementary
symmetric functions. Namely, if $\lambda$ is a partition such that its
conjugate $\lambda'$ (the unique partition whose Young diagram, see Section \ref{sec-ssyt},
is obtained from the one of $\lambda$ by interchanging rows and columns) has length at most $m$ then
\[ s_{\lambda} = \left| e_{\lambda'_i - i + j} \right|_{1 \le i, j \le m}. \]
This identity may be found for instance in \cite[Corollary 7.16.2]{stanley-vol2}.
\begin{theorem}
  Let $f_n$ and $g_n$ be polynomials of the form
  \begin{eqnarray*}
    g_n (x) & = & b_0 x^{2 n - 2} + b_1 x^{2 n - 4} + \ldots + b_{n - 1},\\
    f_n (x) & = & a_0 x^n + a_1 x^{n - 1} + \ldots + a_n,
  \end{eqnarray*}
  and assume that all roots of $f_n$ lie in the upper half-plane. Then
  \[ \int_{- \infty}^{\infty} \frac{g_n (x) dx}{f_n (x) f_n (- x)}
     = \frac{\pi i}{a_0}  \frac{M_n}{\Delta_n}, \]
  where
  \[ \Delta_n = \left| \begin{array}{ccccc}
       a_1 & a_3 & a_5 & \ldots & 0\\
       a_0 & a_2 & a_4 &  & 0\\
       0 & a_1 & a_3 &  & 0\\
       \vdots &  &  & \ddots & \\
       0 & 0 & 0 &  & a_n
     \end{array} \right|, \hspace{2em} M_n = \left| \begin{array}{ccccc}
       b_0 & b_1 & b_2 & \ldots & b_{n - 1}\\
       a_0 & a_2 & a_4 &  & 0\\
       0 & a_1 & a_3 &  & 0\\
       \vdots &  &  & \ddots & \\
       0 & 0 & 0 &  & a_n
     \end{array} \right| . \]
\end{theorem}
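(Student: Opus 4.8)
The plan is to rotate the problem onto the variables used throughout the paper and then feed it into the Schur-function machinery. Writing $r_1,\dots,r_n$ for the roots of $f_n$ and setting $q_k=-i\,r_k$, the hypothesis that the $r_k$ lie in the upper half-plane becomes $\RealPart q_k>0$, and one computes $f_n(x)f_n(-x)=(-1)^n a_0^2\prod_{k=1}^n(x^2+q_k^2)$. Comparing $f_n(x)=a_0\prod_k(x-iq_k)$ coefficientwise records the dictionary $e_j(\boldsymbol q)=i^j a_j/a_0$ between the data of $f_n$ and the elementary symmetric functions of $\boldsymbol q$; this is the bridge that will later turn Schur functions of $\boldsymbol q$ into determinants in the $a_j$. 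Since $g_n$ is even, the integral collapses to $I=\frac{\pi}{(-1)^n a_0^2}\sum_{m=0}^{n-1} b_m\,J_{n-1-m}$, where $J_\ell=\frac2\pi\int_0^\infty x^{2\ell}\prod_k(x^2+q_k^2)^{-1}\,dx$.

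Next I would evaluate each moment $J_\ell$ with Theorem \ref{thm-gnt} at $\alpha=2$, $\beta=2\ell+1$, obtaining $J_\ell=(-1)^\ell s_{\lambda^{(\ell)}}(\boldsymbol q)/s_{\mu^{(\ell)}}(\boldsymbol q)$ with $\lambda^{(\ell)}=\lambda(n)-(2\ell+1)\boldsymbol 1^{n-1}$ and $\mu^{(\ell)}=\lambda(n+1)-2\ell\,\boldsymbol 1^{n}$. The key simplification is that pulling the factor $q_i^{-2\ell}$ out of each row of the alternant $a_{\mu^{(\ell)}+\lambda(n)}$ gives $s_{\mu^{(\ell)}}=e_n^{-2\ell}s_{\lambda(n+1)}$, so every term shares the common denominator $s_{\lambda(n+1)}(\boldsymbol q)$. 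Writing the surviving numerators as alternants and clearing $e_n^{2\ell}$ turns all of the first $n-1$ columns into the $\ell$-independent powers $q_i^{2n-2j-1}$, leaving only the last column to depend on $\ell$ (it is $q_i^{2\ell}$). Multilinearity in that last column then collapses the whole $b$-sum into a single determinant $N$ whose final column has $i$-th entry $\sum_\ell b_{n-1-\ell}(-q_i^2)^\ell=g_n(iq_i)=g_n(r_i)$. At this point $I=\frac{\pi}{(-1)^n a_0^2}\,N/a_{2\lambda(n)+\boldsymbol 1^n}(\boldsymbol q)$.

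The final step is the promised Jacobi-Trudi conversion. For the denominator, $a_{2\lambda(n)+\boldsymbol 1^n}(\boldsymbol q)=s_{\lambda(n+1)}(\boldsymbol q)\,a_{\lambda(n)}(\boldsymbol q)$, and the dual Jacobi-Trudi identity expresses $s_{\lambda(n+1)}$ as an $n\times n$ determinant in the $e_j$ having, after a reflection of rows and columns, the Hurwitz pattern $\det(e_{2j-i})$; substituting $e_j=i^j a_j/a_0$ and extracting the row and column phases produces exactly $\Delta_n=\det(a_{2j-i})$ up to an explicit power of $i$ and of $a_0$. For the numerator I would exploit that both sides are linear in $(b_0,\dots,b_{n-1})$ and reduce to the monomials $g_n(x)=x^{2k}$; then $N$ is a single alternant which, after absorbing $e_n^{2k}$, is the alternant of a genuine partition, so dual Jacobi-Trudi again writes $N/a_{\lambda(n)}(\boldsymbol q)$ as a determinant in the $e_j$. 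Under $e_j=i^j a_j/a_0$ this becomes the Hurwitz determinant with its top row replaced by the indicator of $b_k$, i.e. the corresponding cofactor of $M_n$; resumming over $k$ against the coefficients $b_k$ reassembles $M_n$ itself.

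The main obstacle is bookkeeping of the phases and signs, which is where the factor $\pi i/a_0$ must ultimately be produced. Three sources of $i$'s and signs have to be made to cancel against one another: the $(-1)^n$ coming from $f_n(-x)$, the $(-1)^\ell$ from the partial-fraction residues, and --- most delicately --- the powers of $i$ released when $e_j=i^j a_j/a_0$ is substituted inside an $n\times n$ determinant, which contribute a global factor $i^{n(n+1)/2}$ from the row index together with $(-1)^{n(n+1)/2}$ from the column index. A secondary difficulty is that the shifted shapes $\lambda^{(\ell)}$ are not partitions (they have negative parts), so the dual Jacobi-Trudi identity cannot be quoted for them verbatim; the remedy is to apply it only after multiplying by $e_n^{2\ell}$, at which stage one is dealing with the alternant of an honest partition, and to track the sign incurred in sorting its exponent vector into staircase-plus-partition form. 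Once these phases are assembled, the identification of $N/a_{2\lambda(n)+\boldsymbol 1^n}$ with a definite power of $i$ times a power of $a_0$ times $M_n/\Delta_n$ is forced, completing the proof.
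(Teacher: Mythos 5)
Your proposal is correct and follows essentially the same route as the paper's own proof: factoring $f_n(x)f_n(-x)=(-1)^n a_0^2\prod_k(x^2+q_k^2)$, evaluating the even moments via Theorem \ref{thm-gnt} with $\alpha=2$, absorbing powers of $q_i$ row-by-row so the denominator becomes $s_{\lambda(n+1)}(\boldsymbol{q})$, and then converting both numerator and denominator through the dual Jacobi--Trudi identity and the dictionary $e_j(\boldsymbol{q})=i^j a_j/a_0$, with the final assembly being the expansion of $M_n$ along its first row. The intermediate packaging of the $b$-sum into a single determinant $N$ with last column $g_n(r_i)$ is a harmless embellishment, and the phase bookkeeping you flag as the main obstacle is exactly the part the paper itself only sketches.
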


\begin{proof}
  Write $f_n (x) = a_0 \prod_{j = 1}^n (x - i q_j)$. By assumption,
  $\RealPart{q_j} > 0$. Further,
  \[ f_n (x) f_n (- x) = (- 1)^n a_0^2 \prod_{j = 1}^n (x^2 + q_j^2). \]
  Let $\boldsymbol{q}= (q_1, q_2, \ldots, q_n)$. It follows from Theorem \ref{thm-gnt} that
  \[ \int_{- \infty}^{\infty} \frac{x^{2 \beta} dx}{f_n (x) f_n (- x)}
     = \frac{(- 1)^{n + \beta} \pi}{a_0^2}  \frac{s_{\lambda (n - 1)
     - 2 \beta \cdot \boldsymbol{1}^{n - 1}} (\boldsymbol{q})}{s_{\lambda (n + 1)
     - 2 \beta \cdot \boldsymbol{1}^n} (\boldsymbol{q})} = \frac{(- 1)^n
     \pi}{a_0^2}  \frac{s_{\lambda'} (\boldsymbol{q})}{s_{\lambda (n + 1)}
     (\boldsymbol{q})} \]
  where $\lambda = \lambda (n - 1) + 2 \cdot \boldsymbol{1}^{\beta}$. The latter
  equality is obtained by writing the quotient of Schur
  functions as a quotient of alternants, multiplying the $k$th row of each matrix by
  $q_k^{2 \beta}$, and reordering the columns of the determinant in the
  numerator. The right-hand side now is a quotient of Schur functions to which
  the Jacobi-Trudi identity may be applied. In the denominator, this gives
  \[ s_{\lambda (n + 1)} (\boldsymbol{q}) = \left| e_{n + 1 - 2 k + j}
     (\boldsymbol{q}) \right|_{1 \le k, j \le n} = \left| e_{2 k -
     j} (\boldsymbol{q}) \right|_{1 \le k, j \le n} . \]
  Note that $e_m(\boldsymbol{q}) = 0$ whenever $m<0$ or $m>n$. Further,
  $e_k (\boldsymbol{q}) = i^k a_k / a_0$. Hence, $s_{\lambda (n + 1)}
  (\boldsymbol{q}) = i^{n (n + 1) / 2} \Delta_n / a_0^n$. The term $s_{\lambda'}
  (\boldsymbol{q})$ is dealt with analogously. The claim follows by expanding
  the determinant $M_n$ with respect to the first row.
\end{proof}

\section{A sum related to Feynman diagrams}\label{sec-feynman}

Particle scattering  in quantum field theory is usually described in terms
of Feynman diagrams. A Feynman diagram
is a graphical representation of a particular
term arising in the expansion of the relevant quantum-mechanical
scattering amplitude as a power series in the coupling constants that
parametrize the strengths of the interactions.

From the mathematical point of view, a Feynman diagram is a graph to
which a certain function is associated. If the graph has circuits
(\emph{loops}, in the physics terminology) then this function is
defined in terms of a number of integrals over the $4$-dimensional
momentum space $(k_{0}, \boldsymbol{k})$, where $k_{0}$ is the \emph{energy}
integration variable and $\boldsymbol{k}$ is a $3$-dimensional \emph{momentum}
variable.

Feynman diagrams also appear in calculations of the thermodynamic
properties of a system described by quantum fields. In this context, the
integral over the
energy component of a Feynman loop diagram is replaced by a summation over
discrete energy values. These Matsubara sums were introduced in
\cite{matsubara-1}.  A general method to compute these sums
in terms of an associated integral was presented in \cite{espinosa-5}.

These techniques, applied to the expression
 (\ref{formula-0}) for the integral
  $G_n(\boldsymbol{q})$, give the value of the sum associated with
 the one-loop Feynman diagram consisting of $n$ vertices and vanishing external momenta,
  $N_i=0$, as depicted in Figure \ref{09-0469Fig2}.

{{
\begin{figure}[h]
\begin{center}
\centerline{\includegraphics{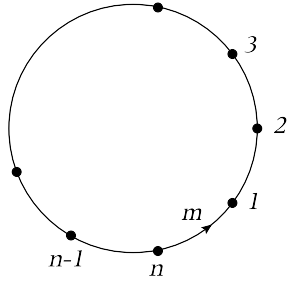}}
\caption{The one-loop Feynman diagram with $n$ vertices and vanishing external momenta. $m$ is
the summation variable associated to each of the internal lines.}
\label{09-0469Fig2}
\end{center}
\end{figure}
}}

The Matsubara sum associated to the diagram in Figure \ref{09-0469Fig2} is
\begin{equation}\label{matsubarasum}
M_n(\boldsymbol{q}):=\sum_{m=-\infty}^{\infty} \prod_{k=1}^n \frac{1}{m^2+q_k^2},
\end{equation}
where the variables $q_k$ are related to the kinematic energies carried by the
(virtual) particles in the Feynman diagram.
This sum was denoted by $S_{G}$ in \cite{espinosa-5}; the notation has been changed
here to avoid confusion.

\begin{example}
The first few Matsubara sums are
\begin{align*}
M_1(q_1)&=\pi\frac{D_1}{q_1},\\
M_2(q_1,q_2)&=\pi\frac{q_2 D_1 - q_1 D_2}{q_1 q_2 (q_2^2 - q_1^2)},\\
M_3(q_1,q_2,q_3)&=\pi\frac{q_2 q_3 (q_2^2-q_3^2)D_1 + q_3 q_1 (q_3^2-q_1^2)D_2 + q_1 q_2 (q_1^2-q_2^2)D_3}{q_1 q_2 q_3 (q_3^2-q_2^2)(q_2^2-q_1^2)(q_1^2-q_3^2)},
\end{align*}
with $D_j=\coth(\pi q_j)$.
\end{example}

\begin{theorem}
The Matsubara sum $M_n(\boldsymbol{q})$ is given by
\[ M_n(\boldsymbol{q}) = \pi \sum_{k=1}^n \frac{\coth(\pi q_k)}{q_k}
   \mathop{\prod_{j=1}^n}_{ j\neq k} \frac{1}{q_j^2 - q_k^2}. \]
\end{theorem}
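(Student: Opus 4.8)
The plan is to combine the algebraic partial fraction identity already established in the proof of Lemma~\ref{Gnqpartialfractions} with the classical Mittag-Leffler expansion of the hyperbolic cotangent, thereby reducing the $n$-fold product to a sum of one-variable Matsubara sums. First I would apply the partial fraction decomposition (\ref{pf-1}) with $y$ replaced by $m^2$ and each $b_k$ replaced by $q_k^2$, which gives
\[
  \prod_{k=1}^n \frac{1}{m^2+q_k^2}
  = \sum_{k=1}^n \frac{1}{m^2+q_k^2} \mathop{\prod_{j=1}^n}_{j\neq k} \frac{1}{q_j^2-q_k^2}.
\]
This is exactly the same decomposition that produced (\ref{formula-0}); here the distinctness of the $q_k$ is what is needed to form the simple poles.

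Next I would substitute this into the definition (\ref{matsubarasum}) of $M_n(\boldsymbol{q})$. Since the sum over $k$ is finite and its coefficients do not depend on $m$, interchanging it with the sum over $m$ is immediate (no analytic subtlety arises, in contrast to an interchange of two infinite sums). This yields
\[
  M_n(\boldsymbol{q}) = \sum_{k=1}^n \left(\mathop{\prod_{j=1}^n}_{j\neq k} \frac{1}{q_j^2-q_k^2}\right)
  \sum_{m=-\infty}^\infty \frac{1}{m^2+q_k^2},
\]
so the whole problem collapses to evaluating the single-variable sum $\sum_{m} (m^2+q^2)^{-1}$.

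The only remaining ingredient is the classical evaluation
\[
  \sum_{m=-\infty}^\infty \frac{1}{m^2+q^2} = \frac{\pi \coth(\pi q)}{q},
\]
which is the standard partial-fraction (Mittag-Leffler) expansion of $\coth$. Substituting this with $q = q_k$ into each summand reproduces the claimed formula term by term, with the factor $\coth(\pi q_k)/q_k$ emerging precisely as asserted. I would simply cite this cotangent expansion as classical; if a self-contained derivation were wanted, it is obtained by integrating $\pi \cot(\pi z)/(z^2+q^2)$ (or equivalently summing residues) over large contours and letting them expand.

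In short, I expect this proof to be essentially routine once the right two tools are recognized. There is no genuine obstacle: the purely algebraic partial fraction step is already in hand from Lemma~\ref{Gnqpartialfractions}, and the analytic content is confined to the one classical $\coth$ expansion. The single point worth stating explicitly is that the summation interchange is legitimate because the outer sum over $k$ is finite, so no question of absolute convergence or Fubini-type justification for a double infinite sum ever needs to be addressed.
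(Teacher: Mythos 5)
Your proposal is correct and is essentially identical to the paper's first proof: the paper likewise specializes the partial fraction identity (\ref{pf-1}) to $y=m^2$, $b_k=q_k^2$, interchanges the finite sum over $k$ with the sum over $m$, and invokes the classical expansion $\pi\coth(\pi z)/z=\sum_{m=-\infty}^{\infty}(z^2+m^2)^{-1}$. (The paper also gives a second, independent proof via the reflection-operator method of \cite{espinosa-5}, but your route matches its Proof 1.)
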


\begin{proof}[Proof 1]
This follows from the partial fraction expansion
\[ \prod_{k=1}^n \frac{1}{m^2 + q_k^2} = \sum_{k=1}^n \frac{1}{q_k^2 + m^2}
   \prod_{j \neq k} \frac{1}{q_j^2 - q_k^2}, \]
which is a special case of (\ref{pf-1}), switching the order of summation, and employing the classical
\[ \frac{\pi \coth(\pi z)}{z} = \sum_{m=-\infty}^\infty \frac{1}{z^2+m^2}. \]
\end{proof}

\begin{proof}[Proof 2]
The method developed in \cite{espinosa-5} shows that
\begin{equation}\label{matsu-00}
M_n(\boldsymbol{q})=\pi \left[ 1 + \sum_{m=1}^n \nbe(q_m)(1-R_m) \right] G_n(\boldsymbol{q}),
\end{equation}
where $G_n(\boldsymbol{q})$ is the integral defined in (\ref{wallis-3}),
\[ \nbe(q) = \frac{1}{e^{2\pi q}-1} = \frac{1}{2}\left(\coth \pi q - 1\right), \]
and $R_m$ is the reflection operator defined by
\[ R_m f(q_1,\ldots,q_m,\ldots)=f(q_1,\ldots,-q_m,\ldots). \]
To use (\ref{matsu-00}) combined with the evaluation (\ref{formula-0}) of $G_n(\boldsymbol{q})$
it is required to compute the action of each $1-R_m$ on the summands of (\ref{formula-0}). Namely,
\begin{equation*}
(1-R_m) \frac{1}{q_k} \mathop{\prod_{j=1}^n}_{j\neq k} \frac{1}{q_j^2 - q_k^2}
= \frac{2 \delta_{km}}{q_k} \mathop{\prod_{j=1}^n}_{j\neq k}  \frac{1}{q_j^2 - q_k^2},
\end{equation*}
where $\delta_{km}$ is the Kronecker delta. Therefore
\begin{equation*}
\nbe(q_m)(1-R_m)G_n(\boldsymbol{q})
= \frac{2 \nbe(q_m)}{q_m} \mathop{\prod_{j=1}^n}_{j\neq m} \frac{1}{q_j^2 - q_m^2},
\end{equation*}
and the result follows from $2\nbe(q)=\coth(\pi q)-1$.
\end{proof}

Finally, an expansion of $M_n(\boldsymbol{q})$ in terms of symmetric functions is given.
Starting with the classical expansion
\[ \frac{\pi \coth q_k}{q_k} = \frac{1}{q_k^2} - 2\sum_{m=1}^\infty (-1)^m q_k^{2m-2} \zeta(2m), \]
where $\zeta(s)$ denotes the Riemann zeta function, it follows that
\[ M_{n} ( \boldsymbol{q}) = \sum_{k=1}^{n} \frac{1}{q_{k}^{2}}
\prod_{j \neq k} \frac{1}{q_{j}^{2}-q_{k}^{2}} - 2 \sum_{m=1}^{\infty} (-1)^{m} \zeta(2m)
\sum_{k=1}^{n} q_{k}^{2(m-1)} \prod_{j \neq k} \frac{1}{q_{j}^{2} - q_{k}^{2}}. \]
Using the identity ($h_{j}$ being the complete homogeneous symmetric 
function)
\[ h_{m-n}(x_{1},\ldots, x_{n}) = (-1)^{n-1} \sum_{k=1}^{n} x_{k}^{m-1} \prod_{j \neq k} \frac{1}{x_{j}-x_{k}}, \]
which follows from Lemma \ref{thm-gn-gen} (or see page $450$, Exercise $7.4$ of
\cite{stanley-vol2}), this proves:

\begin{corollary}
The Matsubara sum $M_n(\boldsymbol{q})$, defined in (\ref{matsubarasum}), is given by
\[ M_{n}(\boldsymbol{q}) = \frac{1}{e_{n}(\boldsymbol{q}^2)} +
   2 \sum_{m=0}^{\infty} (-1)^{m} \zeta(2m+2n) h_{m}(\boldsymbol{q}^{2}). \]
\end{corollary}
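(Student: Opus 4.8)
The plan is to start from the single-sum evaluation $M_n(\boldsymbol{q}) = \pi\sum_{k=1}^n \frac{\coth(\pi q_k)}{q_k}\prod_{j\neq k}(q_j^2-q_k^2)^{-1}$ proved just above, and to feed in the displayed Laurent expansion of $\pi\coth(\pi q_k)/q_k$ about $q_k=0$. Separating the pole term $q_k^{-2}$ from the $\zeta$-series produces exactly the two sums written immediately before the statement, so the corollary reduces to evaluating the two inner symmetric sums over $k$ and then reindexing the outer sum over $m$.

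For the $\zeta$-series I would invoke the displayed identity $\sum_{k}x_k^{m-1}\prod_{j\neq k}(x_j-x_k)^{-1}=(-1)^{n-1}h_{m-n}(x)$ (which the text derives from Lemma~\ref{thm-gn-gen}) with $x_k=q_k^2$; this is legitimate precisely because the exponent $m-1$ is here a nonnegative integer. Since $h_{m-n}(\boldsymbol{q}^2)=0$ whenever $m<n$, the series effectively begins at $m=n$, and substituting $m=n+r$ collapses the signs as $(-1)^{n-1}(-1)^m=(-1)^{r+1}$, so that the prefactor $-2$ yields $2\sum_{r\ge 0}(-1)^r\zeta(2r+2n)h_r(\boldsymbol{q}^2)$, the series appearing in the claim.

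The one step that needs separate care, and the point I expect to be the main obstacle, is the leading sum $\sum_k q_k^{-2}\prod_{j\neq k}(q_j^2-q_k^2)^{-1}$. It corresponds to the exponent $m-1=-1$, which lies outside the range of validity of the $h$-identity: naively setting $m=0$ would give the erroneous value $(-1)^{n-1}h_{-n}(\boldsymbol{q}^2)=0$. Instead I would evaluate it directly by partial fractions. Regarding $1/\bigl(y\prod_{k}(y-q_k^2)\bigr)$ as a rational function of $y$ that decays like $y^{-(n+1)}$ at infinity, the sum of all its residues vanishes; since the residue at $y=0$ equals $(-1)^n/e_n(\boldsymbol{q}^2)$, the remaining residues sum to $(-1)^{n-1}/e_n(\boldsymbol{q}^2)$, and after converting $\prod_{j\neq k}(q_k^2-q_j^2)$ back to $\prod_{j\neq k}(q_j^2-q_k^2)$ this gives $\sum_k q_k^{-2}\prod_{j\neq k}(q_j^2-q_k^2)^{-1}=1/e_n(\boldsymbol{q}^2)$, the first term of the corollary.

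Finally I would record the analytic bookkeeping. The expansion of $\pi\coth(\pi z)/z$ has radius of convergence $1$, its nearest poles sitting at $z=\pm i$, so the termwise substitution and the interchange of the finite $k$-sum with the infinite $m$-sum are justified for $\max_k|q_k|<1$, which is exactly the region where the target series converges. Assembling the two evaluated pieces on this domain yields $M_n(\boldsymbol{q})=1/e_n(\boldsymbol{q}^2)+2\sum_{m\ge 0}(-1)^m\zeta(2m+2n)h_m(\boldsymbol{q}^2)$, as asserted.
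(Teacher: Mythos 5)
Your proof is correct and follows essentially the same route as the paper: start from the single-sum evaluation of $M_n(\boldsymbol{q})$, insert the Laurent expansion of $\pi\coth(\pi q_k)/q_k$, split off the pole term, and apply the identity $h_{m-n}(x_1,\ldots,x_n)=(-1)^{n-1}\sum_k x_k^{m-1}\prod_{j\neq k}(x_j-x_k)^{-1}$ with $x_k=q_k^2$ to the $\zeta$-series. Your careful residue evaluation of the leading sum as $1/e_n(\boldsymbol{q}^2)$ (which also follows from (\ref{pf-1}) at $y=0$) and your convergence remark simply make explicit two points that the paper leaves implicit.
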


\section{Conclusions}\label{sec-conclusions}

The evaluation of definite integrals has the charming quality of taking the reader
for a tour of many parts of mathematics. An innocent-looking generalization of 
one of the oldest formulas in analysis has been shown to connect the work of 
the four authors in the title. 

\medskip

\noindent
{\bf Acknowledgements.} The authors 
wish to thank B.~Berndt for pointing out the 
results described in Section \ref{final-gr}. The second author would
like to thank
the Department of Mathematics at Tulane University for its hospitality.
The third author acknowledges the partial support of
NSF-DMS 0713836. The last author 
is a graduate student partially supported by the
same grant.

\bigskip

\noindent\textbf{Tewodros Amdeberhan} received his Ph.D. in mathematics from Temple 
University under 
the supervision of Doron Zeilberger. His main interest lies in WZ-theory, 
number theory, and elliptic PDEs. He has been a visiting scholar at Princeton and MIT. 
Currently, he teaches at Tulane University and holds a permanent membership 
at DIMACS, Rutgers University. In his spare time he enjoys playing chess and 
soccer.

\noindent\textit{Mathematics Department, 
Tulane University, New Orleans, LA 70118\\
tamdeberhan@math.tulane.edu}

\bigskip

\noindent\textbf{Olivier R.~Espinosa} was born in Valparaiso, Chile. He received a Ph.D. in
particle physics from Caltech in 1990, and joined the faculty of Universidad
Tecnica Federico Santa Maria in Valparaiso in 1992, where he is now professor
of physics. Although his main area of research has been quantum field theory,
since 2000 he has also mantained a fruitful collaboration with Victor~H.
Moll of Tulane University on the pure mathematics of some integrals and
infinite sums connected with the Hurwitz zeta function. Olivier is married
to Nina and is the father of two college-aged children.

\noindent\textit{Departamento de F\'{\i}sica,
Universidad T\'{e}cnica Federico Santa Mar\'{\i}a, Casilla 110-V, Valpara\'{\i}so, Chile\\
olivier.espinosa@usm.cl}

\bigskip

\noindent\textbf{Victor H.~Moll} is a professor of mathematics at Tulane University. He 
studied under Henry McKean at the Courant Institute of New York University. 
In the last decade of the last century, through the influence of a graduate 
student (George Boros), he entered the world of \texttt{Integrals}.
Before coming to Tulane, he spent two years at Temple 
University in Philadelphia, where he was unknowingly coached  by 
Donald J. Newman. He enjoys collaborating with students and 
anyone interested in the large variety of topics related to the evaluation 
of integrals. In his spare time, he enjoys the wonderful things that New 
Orleans has to offer: great music and fantastic food. 

\noindent\textit{Mathematics Department, 
Tulane University, New Orleans, LA 70118\\
vhm@math.tulane.edu}

\bigskip

\noindent\textbf{Armin Straub} received one of the last classic diplomas from Technische
 Universit\"at Darmstadt, Germany, in 2007 under the guidance of Ralf
 Gramlich. At the moment, he is pursuing his Ph.D. at Tulane University, New
 Orleans, where his appetite for combinatorics, special functions, and
 computer algebra is constantly nurtured by his advisor Victor Moll.
 Besides mathematics of almost all sorts he is excited about currently
 teaching his first course, and particularly enjoys eating and playing
 soccer.

\noindent\textit{Mathematics Department, 
Tulane University, New Orleans, LA 70118\\
astraub@math.tulane.edu}


\begin{thebibliography}{10}

\bibitem{pi-formulas}
L.~Berggren, J.~Borwein, and P.~Borwein, \emph{Pi: {A} {S}ource {B}ook},
  Springer-Verlag, New York, 1997.

\bibitem{berndtII}
B.~Berndt, \emph{Ramanujan's {N}otebooks, {P}art {II}}, Springer-Verlag, New
  York, 1989.

\bibitem{irrbook}
G.~Boros and V.~Moll, \emph{Irresistible {I}ntegrals}, 1st ed., Cambridge
  {U}niversity {P}ress, {N}ew {Y}ork, 2004.

\bibitem{bressoud1}
D.~Bressoud, \emph{Proofs and {C}onfirmations: {T}he {S}tory of the
  {A}lternating {S}ign {M}atrix {C}onjecture}, Cambridge University Press, New
  York, 1999.

\bibitem{espinosa-5}
O.~Espinosa, On the evaluation of {M}atsubara sums, \emph{Math. Comp.} (to
  appear, 2010).

\bibitem{eymard-lafon}
P.~Eymard and J.~P. Lafon, \emph{The {N}umber $\pi$}, 1st ed., (trans. S. Wilson), American
  Mathematical Society, Providence, RI, 2004.

\bibitem{gasper1}
G.~Gasper and M.~Rahman, \emph{Basic {H}ypergeometric {S}eries}, 1st ed.,
  Cambridge University Press, New York, 1990.

\bibitem{gosper1}
R.~W. Gosper, Experiments and discoveries in $q$-trigonometry, in
  \emph{Symbolic {C}omputation, {N}umber {T}heory, {S}pecial {F}unctions,
  {P}hysics and {C}ombinatorics}, F.~Garvan and M.~Ismail, eds., Kluwer,
  Dordrecht, Netherlands, 2001, 79--105.

\bibitem{gr}
I.~S. Gradshteyn and I.~M. Ryzhik, \emph{Table of {I}ntegrals, {S}eries, and
  {P}roducts}, 7th ed., A. Jeffrey and D. Zwillinger, eds., Academic Press,
  New York, 2007.

\bibitem{hirschhorn2}
M.~Hirschhorn, Hypergeometric identities, \emph{Notices of the AMS} \textbf{49}
  (2002) 758.

\bibitem{lehmer85}
D.~H. Lehmer, Interesting series involving the central binomial coefficient,
  this \textsc{Monthly} \textbf{92} (1985) 449--457.

\bibitem{levrie-daems}
P.~Levrie and W.~Daems, Evaluating the probability integral using {W}allis's
  product formula for $\pi$, this \textsc{Monthly} \textbf{116} (2009)
  538--541.

\bibitem{manna-moll3}
D.~V.~Manna and V.~H.~Moll, {L}anden survey, in \emph{Probability,
  Geometry and Integrable Systems}, vol. 55, M. Pinsky and B. Birnir, eds.,
  Cambridge University Press, Cambridge, 2008.

\bibitem{matsubara-1}
T.~Matsubara, A new approach to quantum-statistical mechanics, \emph{Progr.
  Theoret. Phys.} \textbf{14} (1955) 351--378.

\bibitem{osler1}
T.~J. Osler, The union of {V}ieta's and {W}allis's product for $\pi$, this
  \textsc{Monthly} \textbf{106} (1999) 774--776.

\bibitem{aequalsb}
M.~Petkovsek, H.~Wilf, and D.~Zeilberger, \emph{A=B}, 1st ed., A K Peters,
  Wellesley, MA, 1996.

\bibitem{sagan-book1}
B.~Sagan, \emph{The {S}ymmetric {G}roup}, 2nd ed., Graduate {T}exts in
  {M}athematics, vol. 203, Springer-Verlag, New York, 2001.

\bibitem{stanley-vol2}
R.~Stanley, \emph{Enumerative {C}ombinatorics}, vol.~2, Cambridge University
  Press, New York, 1999.

\bibitem{stewart1}
J.~Stewart, \emph{Calculus: Early Transcendentals}, 6th ed., Thomson
  Brooks/Cole, Belmont, MA, 2007.

\bibitem{ams-2}
A.~Straub, V.~Moll, and T.~Amdeberhan, The $p$-adic valuation of $k$-central
  binomial coefficients, \emph{Acta Arith.} (to appear, 2010).

\bibitem{vieta1}
F.~Viete, Variorum de {R}ebus {M}athematicis {R}esponsorum {L}iber vii, 1593.,
  in \emph{Opera Omnia}, vol.~I, Georg Olms Verlag, Hildesheim, Germany, 1970, 436--446.

\bibitem{wallis1}
J.~Wallis, Computation of $\pi$ by successive interpolation, in \emph{A
  {S}ource {B}ook in {M}athematics, 1200-1800}, D.~J. Struick, ed., Princeton
  University Press, Princeton, 1986, 244--253.

\bibitem{wastlund}
J.~Wastlund, An elementary proof of the {W}allis product formula for $\pi$,
  this \textsc{Monthly} \textbf{114} (2007) 914--917.

\end{thebibliography}
\end{document}